\documentclass[11pt,a4paper]{article}
\usepackage[utf8]{inputenc}
\usepackage{amsmath}
\usepackage{amsfonts}
\usepackage{amssymb}
\usepackage{amsthm}

\addtolength{\hoffset}{-0.5cm}
\addtolength{\textwidth}{1cm}

\usepackage{tikz}
\usepackage[font={small,it}]{caption}

\newtheorem{prop}{Proposition}
\newtheorem{mylem}[prop]{Lemma}
\newtheorem{mycor}[prop]{Corollary}
\newtheorem{mytheo}[prop]{Theorem}

\theoremstyle{definition}
\newtheorem{mydef}[prop]{Definition}

\usepackage{graphicx}

\numberwithin{equation}{section}
\numberwithin{prop}{section}

\DeclareMathOperator{\Sym}{Sym}

\title{Graphs of kei and their diameters}
\author{Matthew Ashford\thanks{Mathematical Institute, University of Oxford, Radcliffe
Observatory Quarter, Woodstock Road, Oxford OX2 6GG, UK. E-mail: \texttt{matthew.ashford1@btinternet.com}.}}

\begin{document}
\maketitle

\begin{abstract}
A kei on $[n]$ can be thought of as a set of maps $(f_x)_{x \in [n]}$, where each $f_x$ is an involution on $[n]$ such that $(x)f_x = x$ for all $x$ and $f_{(x)f_y} = f_yf_xf_y$ for all $x$ and $y$. We can think of kei as loopless, edge-coloured multigraphs on $[n]$ where we have an edge of colour $y$ between $x$ and $z$ if and only if $(x)f_y = z$; in this paper we show that any component of diameter $d$ in such a graph must have at least $2^d$ vertices and contain at least $2^{d-1}$ edges of the same colour. We also show that these bounds are tight for each value of $d$.
\end{abstract}

\section{Introduction}

A \emph{kei} (or \emph{involutive quandle}) is a pair $(X, \triangleright)$, where $X$ is a non-empty set and $\triangleright:X \times X \to X$ is a binary operation such that:
\begin{enumerate}
\item For any $y, z \in X$, there exists $x \in X$ such that $z = x \triangleright y$;
\item Whenever we have $x,y,z \in X$ such that $x \triangleright y = z \triangleright y$, then $x=z$;
\item For any $x,y,z \in X$, $(x \triangleright y) \triangleright z = (x \triangleright z) \triangleright (y \triangleright z)$;
\item For any $x \in X$, $x \triangleright x = x$;
\item For any $x,y \in X$, $(x \triangleright y) \triangleright y = x$.
\end{enumerate}
Note that conditions 1 and 2 above are equivalent to the statement that for each $y$, the map $x \mapsto x \triangleright y$ is a bijection on $X$.

A \emph{quandle} is a pair $(X, \triangleright)$ satisfying conditions 1--4 above, while a \emph{rack} is a pair $(X, \triangleright)$ satisfying conditions 1--3. As mentioned in \cite{mainpaper}, racks originally developed from correspondence between J.H. Conway and G.C. Wraith in 1959, while quandles were introduced independently by Joyce \cite{quanhist1} and Matveev \cite{quanhist2} in 1982 as invariants of knots, and kei were first studied by Takasaki \cite{keihist} in 1943. Fenn and Rourke \cite{history} provide a history of racks and quandles, while Nelson \cite{widerhist} gives an overview of how these structures relate to other areas of mathematics; a recent paper by Stanovsk{\`y} \cite{survey} gives a thorough survey of the history of research on kei.

As a first example of a kei, note that for any set $X$, if we define $x \triangleright y = x$ for all $x,y \in X$, we obtain a kei known as the \emph{trivial kei} $T_X$. Let $G$ be a group and let $X$ be the set of all involutions of $G$. If we define a binary operation $\triangleright:X \times X \to X$ by $x \triangleright y := y^{-1}xy$, then $(X, \triangleright)$ is a kei; it is an example of a \emph{conjugation quandle}. For a further example, define a binary operation on $[n]$ by setting $i \triangleright j := 2j - i$ (mod $n$); $([n], \triangleright)$ is known as a \emph{dihedral kei}.

For any kei $(X, \triangleright)$, we can define a set of involutions $(f_y)_{y \in X}$ by setting $(x)f_y = x \triangleright y$ for all $x$ and $y$. The following well-known result (see for example, \cite{history}, \cite{mainpaper}) gives the correct conditions for a collection of maps $(f_y)_{y \in X}$ to define a kei.
\begin{prop}
\label{mapdef}
Let $X$ be a set and $(f_x)_{x \in X}$ be a collection of functions each with domain and co-domain $X$. Define a binary operation $\triangleright:X \times X \to X$ by $x \triangleright y := (x)f_y$. Then $(X, \triangleright)$ is a kei if and only if $f_y$ is an involution for each $y \in X$ and the following conditions hold: for all $y,z \in X$ we have
\begin{equation}
\label{maps}
f_{(y)f_z} = f_zf_yf_z,
\end{equation}
and for all $x \in X$ we have
\begin{equation}
\label{quancond}
(x)f_x = x.
\end{equation}
\end{prop}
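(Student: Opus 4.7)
The plan is to translate each of the five kei axioms into a statement about the maps $f_y$ and check that, taken together, these translations are equivalent to the three listed conditions: each $f_y$ being an involution, equation \eqref{maps}, and equation \eqref{quancond}.

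First, axiom 4 ($x \triangleright x = x$) is literally \eqref{quancond} under the substitution $x \triangleright y = (x)f_y$, so these two are equivalent with no work. Axiom 5 ($(x \triangleright y) \triangleright y = x$) similarly becomes $((x)f_y)f_y = x$ for every $x$, i.e.\ $f_y \circ f_y = \mathrm{id}$, so axiom 5 is equivalent to each $f_y$ being an involution. Since an involution is automatically a bijection, this also delivers axioms 1 and 2 (which together say that $x \mapsto x \triangleright y$ is a bijection on $X$) for free.

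It remains to show that, granted each $f_y$ is an involution, axiom 3 is equivalent to \eqref{maps}. Translating axiom 3, $(x \triangleright y) \triangleright z = (x \triangleright z) \triangleright (y \triangleright z)$, one finds $((x)f_y)f_z = ((x)f_z)f_{(y)f_z}$ for all $x \in X$, which, as an equality of functions in postfix notation, reads $f_y f_z = f_z f_{(y)f_z}$. Pre-composing both sides with $f_z$ and using $f_z f_z = \mathrm{id}$ then yields $f_z f_y f_z = f_{(y)f_z}$, which is exactly \eqref{maps}; the reverse implication is obtained by reversing these steps.

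The main subtlety is simply keeping track of the postfix convention $(x)f_y$, so that compositions read left to right while the kei axioms (phrased with $\triangleright$) still read naturally; beyond this bookkeeping, there is no real obstacle, as the proposition is essentially a dictionary between the two formulations.
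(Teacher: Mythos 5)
Your proof is correct and follows exactly the decomposition the paper indicates (axiom 4 $\leftrightarrow$ \eqref{quancond}, axiom 5 $\leftrightarrow$ involutivity, axiom 3 $\leftrightarrow$ \eqref{maps} given involutivity, with axioms 1 and 2 coming for free from bijectivity); the paper simply omits the details you have written out. Nothing is missing, and the handling of the postfix composition in the axiom-3 step is accurate.
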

\begin{proof}
As noted earlier, each $f_y$ is a bijection; it remains to show that items 3 and 4 in the definition of a kei are equivalent to \eqref{maps} and \eqref{quancond} respectively, while item 5 is equivalent to the statement that each $f_y$ is an involution. This is essentially a reworking of the definition and we omit the simple details.
\end{proof}
This means that we can just as well define a kei on a set $X$ by the set of maps $(f_y)_{y \in X}$, providing they are all involutions satisfying \eqref{maps} and \eqref{quancond}. We will move freely between the two definitions, with $x \triangleright y = (x)f_y$ for all $x,y \in X$ unless otherwise stated.

Now observe that any kei on $X$ can be represented by a multigraph on $X$; we give each vertex a colour and then put an edge of colour $i$ from vertex $j$ to vertex $k$ if and only if $(j)f_i = k$. This is well-defined as each $f_i$ is an involution, so $(j)f_i = k$ if and only if $(k)f_i = j$. We then remove all loops from the graph; i.e.\ if $(j)f_i = j$ we don't have an edge of colour $i$ incident to $j$.

It will be helpful to recast the representation of kei by multigraphs in a slightly different setting. Let $V$ be a finite set and let $\sigma \in \Sym(V)$ be an involution; then we can define a simple graph $G_\sigma$ on $V$ by letting $uv \in E(G_\sigma)$ if and only if $u \neq v$ and $(u)\sigma = v$. As $\sigma$ is a disjoint product of transpositions, we see that $G_\sigma$ consists of a partial matching and some isolated vertices. We can now extend this definition to the case of multiple involutions in a natural way.
\begin{mydef}
\label{multinvodef}
Suppose $\Sigma = \lbrace \sigma_1, \ldots, \sigma_k \rbrace \subseteq \Sym(V)$ is a set of involutions on a set $V$. Define a loopless multigraph $G_\Sigma = (V,E)$ with a $k$-edge-colouring by putting an edge of colour $i$ from $u$ to $v$ if and only if $u \neq v$ and $(u)\sigma_i = v$.

We also define the \emph{reduced} graph $G_\Sigma^0$ to be the simple graph on $V$ obtained by setting $e = uv \in E(G_\Sigma^0)$ if and only if there is at least one edge from $u$ to $v$ in $G_\Sigma$. 
\end{mydef}
Observe that if $\Sigma' \subseteq \Sigma$, then $G_{\Sigma'}$ is a subgraph of $G_\Sigma$. Now let us return specifically to kei.
\begin{mydef}
Let $K = (X, \triangleright)$ be a kei, and let $(f_y)_{y \in X}$ be the associated maps. For any $S \subseteq X$, define $\Sigma_S = \lbrace f_y \mid y \in S \rbrace$. Then by $G_S$ we mean the multigraph $G_{\Sigma_S}$ in the sense of Definition \ref{multinvodef}; $G_S$ thus has an associated $|S|$-edge-colouring, although if $|S|=1$ we may not necessarily consider $G_S$ as being coloured. We will also write $G_K = G_X$, indicating the graph for the whole kei.
\end{mydef}
Figure \ref{examples} gives two examples of graphs representing kei.
\begin{figure}
\begin{center}
\begin{tikzpicture}[scale=1.4]

\draw [red] (-4,0.336) -- (-3,0.336);
\draw [blue] (-3.5,1.202) -- (-3,0.366);
\draw [green] (-4,0.336) -- (-3.5,1.202);

\draw [fill, green] (-3,0.336) circle [radius=0.1];
\draw [fill, blue] (-4,0.336) circle [radius=0.1];
\draw [fill, red] (-3.5,1.202) circle [radius=0.1]; 
\draw [fill, violet] (-2.25,0.969) circle [radius=0.1];

\node [above right] at (-3.5,1.202) {(1 2)};
\node [below left] at (-4,0.366) {(1 3)};
\node [below right] at (-3,0.336) {(2 3)};
\node [left] at (-2.3,0.969) {$\iota$};

\draw [red] (0,0.1) -- (0.76,0.75);
\draw [red] (3.52,0.1) -- (2.76,0.75);
\draw [blue] (0,1.4) -- (0.76,0.75);
\draw [blue] (3.52,1.4) -- (2.76,0.75);
\draw [green] (0,1.4) -- (0,0.1);
\draw [green] (3.52,1.4) -- (3.52,0.1);
\draw [violet] (0,0.1) -- (2.76,0.75);
\draw [violet] (0.76,0.75) -- (3.52,0.1);
\draw [brown] (0,1.4) -- (2.76,0.75);
\draw [brown] (3.52,1.4) -- (0.76,0.75);
\draw [darkgray] (0,1.4) -- (3.52,0.1);
\draw [darkgray] (3.52,1.4) -- (0,0.1);

\draw [fill, blue] (0,0.1) circle [radius=0.1]; 
\draw [fill, red] (0,1.4) circle [radius=0.1]; 
\draw [fill, green] (0.76,0.75) circle [radius=0.1]; 
\draw [fill, darkgray] (2.76,0.75) circle [radius=0.1]; 
\draw [fill, brown] (3.52,0.1) circle [radius=0.1]; 
\draw [fill, violet] (3.52,1.4) circle [radius=0.1]; 

\node [above left] at (0,1.4) {(1 2)};
\node [below left] at (0,0.1) {(1 3)};
\node [left] at (0.755,0.75) {(2 3)};
\node [right] at (2.765,0.75) {(1 4)};
\node [above right] at (3.52,1.4) {(3 4)};
\node [below right] at (3.52,0.1) {(2 3)};
\end{tikzpicture}
\end{center}
\caption{Graphical representations of two kei. Both of these are subquandles of conjugation quandles; $S_3$ on the left and $S_4$ on the right.}
\label{examples}
\end{figure}
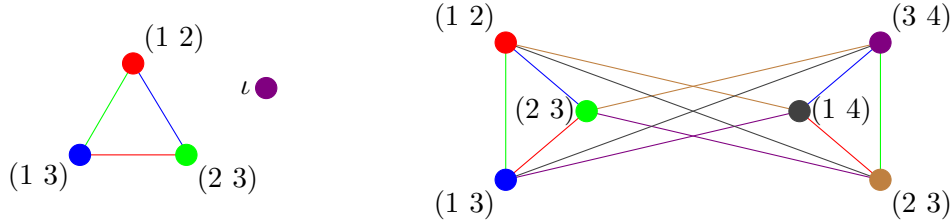
Before stating our main result, we will need some more definitions. Let $(X, \triangleright)$ be a kei; then a \emph{subkei} of $(X, \triangleright)$ is a kei $(Y, \triangleright|_{Y \times Y})$\footnote{The notation $\triangleright|_{Y \times Y}$ in the above context will always be abbreviated to $\triangleright$, with the restriction to the subset $Y$ left implicit.} where $Y \subseteq X$. Thus a subset $Y \subseteq X$ forms a subkei if and only if for all $y,z \in Y$, $(z)f_y \in Y$. For any $T \subseteq X$ and $u,v \in X$, denote by $d_T(u,v)$ the graph distance between $u$ and $v$ in the graph $G_T$. As $G_T^0$ is the simple graph on $X$ formed by ignoring all colours and multiple edges, $d_T(u,v)$ is clearly identical to the graph distance between $u$ and $v$ in the reduced graph $G_T^0$. We will prove the following result.
\begin{mytheo}
\label{diam}
Let $(X, \triangleright)$ be a kei, and let $(S,\triangleright)$ be a subkei. Let $C \subseteq X$ span a component of $G_S$ (or equivalently of $G_S^0$) and suppose that $G_S[C]$ has diameter $d$. Then $|C| \geqslant 2^d$, and further there exists some $k \in S$ such that there are at least $2^{d-1}$ edges of colour $k$ in $G_S[C]$.
\end{mytheo}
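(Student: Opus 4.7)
The plan is to proceed by induction on $d$. The bases $d=0$ and $d=1$ are immediate: a one-vertex component satisfies $|C|=1=2^0$ with the edge bound vacuous, and any diameter-one component has at least two vertices and at least one edge of some colour.

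For the inductive step, with $d\geqslant 2$, I fix $u,v\in C$ at distance $d$ in $G_S$ and a shortest $u$--$v$ path $u=v_0,v_1,\ldots,v_d=v$ with edge colours $c_1,\ldots,c_d\in S$. The focus will be on the first colour, $c:=c_1$. By Proposition \ref{mapdef}, $f_{(y)f_c}=f_c f_y f_c$, so the involution $f_c$ is a graph automorphism of the reduced graph $G_S^0[C]$ that permutes the colour labels via $y\mapsto (y)f_c$ while preserving adjacency; in particular $f_c$ is an isometry of the graph distance $d_S$ on $C$.

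This isometry allows me to partition $C$ using $\delta(x):=d_S(x,u)-d_S(x,v_1)\in\{-1,0,+1\}$, the values being confined to this set because $uv_1$ is an edge. Setting $A^{\pm}:=\delta^{-1}(\pm 1)$ and $A^0:=\delta^{-1}(0)$, the fact that $f_c$ swaps $u$ and $v_1$ together with the isometry property forces $f_c(A^+)=A^-$, so $|A^+|=|A^-|$; moreover each $x\in A^+$ is paired with a distinct $f_c(x)\in A^-$, yielding at least $|A^+|$ edges of colour $c$ in $G_S[C]$. Hence both conclusions of the theorem reduce to the single sub-claim
\[
|A^+|\geqslant 2^{d-1},
\]
since this yields $|C|\geqslant 2|A^+|\geqslant 2^d$ and at least $2^{d-1}$ edges of the single colour $c$.

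For the sub-claim, observe that the tail $v_1,v_2,\ldots,v_d$ of the shortest path lies entirely in $A^+$: shortness of the path forces $d_S(v_1,v_i)=i-1$ and $d_S(u,v_i)=i$, so $\delta(v_i)=+1$ for every $i\geqslant 1$. The component of $v_1$ in the induced subgraph $G_S[A^+]$ therefore has diameter at least $d-1$, which is the right set-up for an appeal to the inductive hypothesis. The principal obstacle is that $A^+$ is in general not closed under the kei action, so $G_S[A^+]$ is not of the form $G_{S'}$ for any subkei $S'$ of $X$ and the inductive hypothesis does not apply directly. I anticipate circumventing this either by exhibiting a kei-closed subset of $A^+$ that still carries a walk of length $d-1$---a natural candidate is the component of $v_1$ in $G_{S'}$, where $S'\subseteq S$ is the subkei obtained by deleting the $\mathrm{Inn}_S(X)$-orbit of $c$ from $S$---or by strengthening the inductive hypothesis to a statement purely about the comparison set $\{x:d_S(x,v)<d_S(x,u)\}$, which sidesteps the need for any subkei structure on the witness set. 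Arranging the reduction so that the diameter genuinely drops by one (in particular, excluding cases where the orbit of $c$ meets the path too often to preserve length $d-1$) is where I expect the real work to lie.
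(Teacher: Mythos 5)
Your reduction is correct as far as it goes, and it is a genuinely different opening move from the paper's: \eqref{maps} does make $f_c$ an automorphism of $G_S^0$ (it sends a colour-$y$ edge $xz$ to a colour-$(y)f_c$ edge, and $(y)f_c \in S$ because $S$ is a subkei), it fixes the component $C$ setwise and swaps $u$ with $v_1$, so it is an isometry exchanging $A^+$ and $A^-$, and the resulting colour-$c$ matching between these two sets delivers both conclusions of the theorem once $|A^+| \geqslant 2^{d-1}$ is known. But that sub-claim carries essentially the full content of the theorem, and you do not prove it; you name two candidate strategies and correctly flag that this is ``where the real work'' lies. Neither strategy works as stated. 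For the first: even granting that $S' = S \setminus O$ is a subkei (where $O$ is the orbit of $c$), the colours $c_2, \ldots, c_d$ on the tail of the path, while distinct from $c$ by Corollary \ref{distinct}, may well lie in $O$, so the tail need not survive in $G_{S'}$; worse, you give no argument that the $G_{S'}$-component of $v_1$ is contained in $A^+$ --- nothing you have established prevents an edge of a colour outside $O$ from crossing from $A^+$ into $A^0$ or $A^-$. Ruling that out is a convexity-of-halfspaces statement that needs real input from the kei axioms, not just the automorphism observation. The second strategy, ``strengthen the induction to a statement about $\lbrace x : d_S(x,v) < d_S(x,u)\rbrace$'', is a restatement of the unproved sub-claim rather than a route to it. So there is a genuine gap at the heart of the argument.

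For comparison, the paper does not induct on $d$ at all. Its engine is a path-rerouting lemma (Lemma \ref{hang}): given a shortest $uv$-path and a colour $c_i$ appearing on it, one can reflect the initial segment by $f_{c_i}$ to obtain a new shortest path whose first edge has colour $c_i$. Iterating this (Lemma \ref{seq}) produces, for each of the $2^d$ strictly increasing sequences $\mathbf{s}$ from $[d]$, a shortest path beginning with the colours of $\mathbf{s}$ in order; a path-surgery argument (Corollary \ref{vert}) shows the endpoints $u_{\mathbf{s}}$ are pairwise distinct, giving the $2^d$ vertices directly, and the $2^{d-1}$ sequences ending in $d$ supply the monochromatic edges. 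If you want to complete your approach, expect to need a lemma of comparable strength to Lemma \ref{hang} to control $A^+$; the isometry observation alone does not suffice.
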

The remainder of the paper is organised as follows. In Section \ref{spsec} we show how to construct a large number of shortest paths between two vertices $u$ and $v$ that are connected in $G_S$. In Section \ref{elsec} we will show that any sequence of colours occurring in order (from $u$ to $v$) on a shortest path corresponds to a different vertex connected to $u$, which will prove the result. In Section \ref{extrsec} we give examples to show that Theorem \ref{diam} is tight for all values of $d$.

\section{Shortest paths}
\label{spsec}

We begin by showing that there are many shortest paths between pairs of vertices.
\begin{mylem}
\label{hang}
Let $S \subseteq X$ be a subkei, and let $u,v \in X$ such that $d_S(u,v) = d \in [2, \infty)$; let $u=v_0v_1 \cdots v_d=v$ be a path of length $d$ in $G_S$ between $u$ and $v$. For each $i$, let $c_i$ be the colour of the $v_{i-1}v_i$ edge on the path, so $(v_i)f_{c_i} = v_{i-1}$. Now fix some $i \geqslant 2$ and define $w_k = (v_k)f_{c_i}$ for $k = 0, \ldots, i-2$. Then the vertices $w_0, \ldots, w_{i-2},v_0, \ldots, v_d$ are all distinct and $v_0w_0 \cdots w_{i-2}v_i \cdots v_d$ is a (shortest) $uv$-path in $G_S$.
\end{mylem}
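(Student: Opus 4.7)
The plan is to exhibit the candidate walk $v_0 w_0 \cdots w_{i-2} v_i \cdots v_d$ as a walk in $G_S$ of length at most $d$, and then use $d_S(v_0,v_d)=d$ to force every step to be a real edge and all $d+1$ vertices to be distinct. The key observation is that the initial portion $v_0 v_1 \cdots v_{i-2}$ of the original path is being conjugated by $f_{c_i}$, and the kei identity \eqref{maps} precisely controls how each colour transforms under this conjugation.

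First I would identify the colour of each new edge. The step $v_0 \to w_0$ is just $(v_0)f_{c_i}$, i.e.\ an edge of colour $c_i \in S$. For $0 \leq k \leq i-3$, the identity $f_{(c_{k+1})f_{c_i}} = f_{c_i} f_{c_{k+1}} f_{c_i}$ gives in one line that
\[
(w_k)\, f_{(c_{k+1})f_{c_i}} \;=\; (v_k) f_{c_i} f_{c_i} f_{c_{k+1}} f_{c_i} \;=\; (v_{k+1}) f_{c_i} \;=\; w_{k+1},
\]
so $w_k w_{k+1}$ is an edge of colour $(c_{k+1}) f_{c_i}$. An entirely analogous calculation, starting from $v_i = (v_{i-2})f_{c_{i-1}}f_{c_i}$, exhibits $w_{i-2} v_i$ as an edge of colour $(c_{i-1}) f_{c_i}$. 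Because $S$ is a subkei, all colours arising here remain in $S$.

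For the distinctness claim, each of the $d$ consecutive pairs in the candidate walk is either a genuine $S$-edge or, if its two endpoints happen to coincide, contributes no movement; so the candidate walk realises a walk of length at most $d$ from $v_0$ to $v_d$ in $G_S$. Since $d_S(v_0,v_d)=d$ there is no slack: no step may collapse and no vertex may repeat, else excising the resulting cycle would give a strictly shorter $v_0 v_d$-walk. Hence the $d+1$ vertices in the new walk are pairwise distinct and it is a shortest $uv$-path. I do not anticipate a serious obstacle; the only substantive ingredient is the single application of \eqref{maps}, and the subkei hypothesis is used precisely to keep every transformed colour inside $S$.
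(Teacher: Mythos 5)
Your colour computation is correct, and your ``no slack in a length-$d$ walk'' argument does cleanly establish that $v_0w_0\cdots w_{i-2}v_i\cdots v_d$ is a shortest $uv$-path, avoiding the paper's reverse induction for that part. But the lemma asserts more than this: it claims that \emph{all} of $w_0,\ldots,w_{i-2},v_0,\ldots,v_d$ are distinct, and the interior vertices $v_1,\ldots,v_{i-1}$ of the original path do not lie on your new walk. Your argument shows only that the $d+1$ vertices \emph{on} the new walk are pairwise distinct; it says nothing about whether some $w_j$ coincides with some $v_l$ with $1\leqslant l\leqslant i-1$, since such a coincidence creates no repetition within the walk and hence no excisable cycle. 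This is not a cosmetic omission: Corollary \ref{distinct} is proved precisely by deriving a contradiction from $w_j=v_{j-1}$, a vertex that (for $j\geqslant 2$) is absent from the new path, so the weaker statement you prove does not support what comes after.

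The gap is fixable from where you stand. Once both $v_0\cdots v_d$ and $v_0w_0\cdots w_{i-2}v_i\cdots v_d$ are known to be shortest $uv$-paths, every vertex on either path sits at a determined distance from $u$; $w_j$ is at distance $j+1$ and $v_l$ at distance $l$, so $w_j=v_l$ forces $l=j+1$. That residual case dies by applying the involution $f_{c_i}$: if $w_j=v_{j+1}$ with $j+1\leqslant i-2$ then $w_{j+1}=(v_{j+1})f_{c_i}=(w_j)f_{c_i}=v_j$, which sits at the wrong distance from $u$, and if $j=i-2$ then $w_{i-2}=v_{i-1}$ gives $v_{i-2}=(v_{i-1})f_{c_i}=v_i$, absurd. (The paper instead runs a reverse induction on $j$, using injectivity of $f_{c_i}$ to rule out $w_j=v_l$ for $j<l\leqslant i$ and the hybrid shortcut $v_0\cdots v_l w_{j+1}\cdots w_{i-2}v_i\cdots v_d$ of length $d-(j-l+1)$ to rule out $l\leqslant j$.) Either way, an explicit argument for $w_j\neq v_l$ when $1\leqslant l\leqslant i-1$ must be added before the lemma can be used as stated.
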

\begin{proof}
(See Figure \ref{shortpathfig} throughout). For any $j \leqslant i-2$, denote by $P(j)$ the statement that the vertices $w_j, \ldots, w_{i-2}, v_0, \ldots, v_d$ are all distinct, and that the path $v_0 \cdots v_jw_j \cdots w_{i-2}v_i \cdots v_d$ is a (shortest) $uv$-path in $G_S$. Then the lemma is the statement $P(0)$; we will prove that $P(j)$ holds for all $j$ by reverse induction. 

\begin{figure}
\begin{center}
\begin{tikzpicture}
\draw (0,1.5) --(4.5,1.5);
\draw [red] (4.5,1.5) -- (6,1.5);
\draw (6,1.5) -- (9,1.5);

\draw [red] (0,0) -- (0,1.5);
\draw [red] (1.5,0) -- (1.5,1.5);
\draw [red] (3,0) -- (3,1.5);

\draw [darkgray] (0,0) -- (3,0) -- (6,1.5);

\draw [fill] (0,1.5) circle [radius=0.1];
\draw [fill] (1.5,1.5) circle [radius=0.1];
\draw [fill] (3,1.5) circle [radius=0.1];
\draw [fill] (4.5,1.5) circle [radius=0.1];
\draw [fill] (6,1.5) circle [radius=0.1];
\draw [fill] (7.5,1.5) circle [radius=0.1];
\draw [fill] (9,1.5) circle [radius=0.1];

\draw [fill] (0,0) circle [radius=0.1];
\draw [fill] (1.5,0) circle [radius=0.1];
\draw [fill] (3,0) circle [radius=0.1];

\node [above] at (6,1.5) {$v_i$};
\node [above] at (4.5,1.5) {$v_{i-1}$};
\node [above] at (3,1.5) {$v_{i-2}$};
\node [above] at (0,1.5) {$v_0$};
\node [below] at (0,-0.1) {$w_0$};
\node [below] at (3,-0.1) {$w_{i-2}$};
\node [above] at (9,1.5) {$v_d$};
\end{tikzpicture}
\end{center}
\caption{An example with $i=4$. The colour $c_i$ is represented by red, with the colours on the lower path being the colours $c'$, $c''$ etc. found as we progress from right to left along the upper path.}
\label{shortpathfig}
\end{figure}
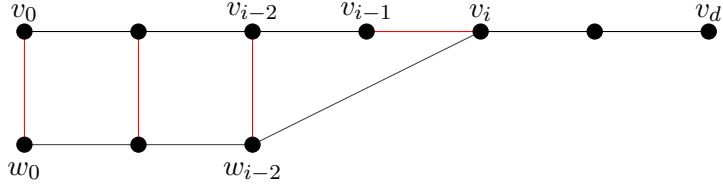
Firstly, as $w_{i-2} = (v_{i-2})f_{c_i}$ and $v_i = (v_{i-1})f_{c_i}$ we see that $w_{i-2} \notin \lbrace v_{i-1}, v_i \rbrace$ (or there would be a vertex with two incident edges of the same colour). If we had $w_{i-2} = v_l$ for some $l > i$ then $v_0 \cdots v_{i-2}v_l \cdots v_d$ would be a $uv$-path in $G_S$ of length $d - (l-i+1) < d$, a contradiction. Now observe that
\[ w_{i-2} := (v_{i-2})f_{c_i} = (v_{i-1})f_{c_{i-1}}f_{c_i} = (v_i)f_{c_i}f_{c_{i-1}}f_{c_i} = (v_i)f_{c'}, \]
where $c' = (c_{i-1})f_{c_i}$; note that $c' \in S$ as $S$ is a subkei and so there is an edge in $G_S$ between $v_i$ and $w_{i-2}$. So now suppose $w_{i-2} = v_l$ for $l < i-1$; then $v_0 \cdots v_l=w_{i-2}v_i \cdots v_d$ is a $uv$-path in $G_S$ of length $d - (i-l-1)$, again a contradiction. So $w_{i-2} \neq v_l$ for any $l$, and we can use the $v_{i-2}w_{i-2}$ and $w_{i-2}v_i$ edges, of colours $c_i$ and $c'$ respectively, to construct a $uv$-path $v_0 \cdots v_{i-2}w_{i-2}v_i \cdots v_d$ of length $d$ in $G_S$. This establishes $P(i-2)$.

So now take $j < i-2$ and suppose $P(j+1)$ holds. As $v_{j+1}w_{j+1}, \ldots, v_{i-2}w_{i-2}$ and $v_{i-1}v_i$ are all edges of colour $c_i$, $w_j \neq v_l$ for $j < l \leqslant i$ and it is also not equal to any other $w_k$. As before, if $w_j = v_l$ for $l > i$ we can construct a $uv$-path in $G_S$ of length $d - (l-i+1)$, a contradiction. Now observe that
\[ w_j := (v_j)f_{c_i} = (v_{j+1})f_{c_{j+1}}f_{c_i} = (w_{j+1})f_{c_i}f_{c_{j+1}}f_{c_i} = (w_{j+1})f_{c''} , \]
where $c'' = (c_{j+1})f_{c_i} \in S$, so there is an edge in $G_S$ between $w_j$ and $w_{j+1}$. Now suppose $w_j = v_l$ for some $l \leqslant j$; then $v_0 \cdots v_l=w_jw_{j+1} \cdots w_{i-2}v_i \cdots v_d$ is a $uv$-path in $G_S$ of length $d - (j-l+1)$, a contradiction (note the existence of the $w_{j+1}v_i$-path follows from the assumed $P(j+1)$). Hence $w_j \neq v_l$ for any $l$, and we can use the $v_jw_j$ and $w_jw_{j+1}$ edges, of colours $c_i$ and $c''$ respectively, to construct a $uv$-path $v_0 \cdots v_jw_j \cdots w_{i-2}v_i \cdots v_d$ of length $d$ in $G_S$. Thus $P(j)$ holds, and so by reverse induction $P(0)$ holds, proving the lemma.
\end{proof}
\begin{mycor}
\label{distinct}
Let $P$ be any shortest $uv$-path in $G_S$. Then no two edges of $P$ have the same colour.
\end{mycor}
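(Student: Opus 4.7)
The plan is a short proof by contradiction that hands the real work over to Lemma~\ref{hang}. Let $P = v_0 v_1 \cdots v_d$ be a shortest $uv$-path in $G_S$, with the edge $v_{k-1}v_k$ having colour $c_k$. If $d \leqslant 1$ the statement is vacuous, so assume $d \geqslant 2$ and suppose for contradiction that $c_a = c_b$ for some indices $1 \leqslant a < b \leqslant d$. Since $b > a \geqslant 1$ we have $b \geqslant 2$, so Lemma~\ref{hang} may be invoked with its distinguished index taken to be $b$.

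The lemma produces detour vertices $w_k = (v_k)f_{c_b}$ for $k = 0, \ldots, b-2$ and asserts that the entire list $w_0, \ldots, w_{b-2}, v_0, \ldots, v_d$ consists of pairwise distinct elements. However, because the edge $v_{a-1}v_a$ has colour $c_a = c_b$, we immediately get
\[ w_{a-1} = (v_{a-1})f_{c_b} = (v_{a-1})f_{c_a} = v_a. \]
Since $a \leqslant b-1$, the index $a-1$ does lie in $\{0, \ldots, b-2\}$, so $w_{a-1}$ genuinely appears in the distinct list, and it equals $v_a \in \{v_0, \ldots, v_d\}$. This contradicts the distinctness guaranteed by the lemma, and the corollary follows.

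The main obstacle is essentially nonexistent: Lemma~\ref{hang} has already done the hard work of showing that the swapped-in vertices $w_k$ coincide with no $v_l$, so all that remains is the one-line identity above together with the trivial index check $0 \leqslant a-1 \leqslant b-2$. No case analysis (for instance, separately treating $b = a+1$) is needed, since the argument only uses $a-1 \leqslant b-2$, which is equivalent to $a < b$.
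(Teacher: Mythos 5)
Your proof is correct and follows essentially the same route as the paper: apply Lemma~\ref{hang} at the later of the two same-coloured edges and derive a contradiction with the distinctness of the $w_k$ from the $v_l$. The only (harmless) difference is that you examine $w_{a-1}=v_a$ rather than the paper's $w_j=v_{j-1}$, which neatly removes the paper's side remark about adjacent edges of the same colour.
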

\begin{proof}
Suppose  there is a path $u=v_0 \cdots v_d=v$ and a colour $c \in S$ such that $(v_{j-1})f_c = v_j$ and $(v_{i-1})f_c = v_i$ for some $j < i-1$ (we can't have $j = i-1$ as then $v_j$ is incident to two edges of colour $c$). From the lemma, with $c = c_i$, $w_j = (v_j)f_{c_i} \neq v_l$ for any $l$, but here $w_j = (v_j)f_c = v_{j-1}$, a contradiction.
\end{proof}

\section{Sequences of elements}
\label{elsec}

Suppose we are considering a shortest path $P:(u = v_0, \ldots, v_d = v)$ in $G_S$. In the light of Corollary \ref{distinct}, and to ease notation, we will assume without loss of generality that $X = [n]$ and $[d] \subseteq S$, and that the edge between $v_{i-1}$ and $v_i$ in $P$ is of colour $i$. Now for each strictly increasing sequence $\mathbf{s} = (a_1, \ldots, a_r)$ of elements from $[d]$, define
\[ u_\mathbf{s} = (u)f_{a_1} \cdots f_{a_r}, \]
where we note $u_\mathbf{s} \in S$ as $S$ is a subkei. Now define $U_0 = \lbrace u \rbrace$ and for $r = 1, \ldots, d$, define
\[ U_r = \lbrace u_\mathbf{s} \mid \mathbf{s} \text{ is strictly increasing, } |\mathbf{s}| = r \rbrace, \]
so in particular $U_d = \lbrace v \rbrace$. Now let $\mathbf{e}_i = (1, \ldots, i)$ for all $i$; then $v_i = (u)f_1 \cdots f_i = u_{\mathbf{e}_i}$ and so $v_i \in U_i$ for all $i$. We also have the property that as $\mathbf{s}$ is increasing, $a_i \geqslant i$ for any $i$, with equality if and only if the subsequence consisting of the first $i$ terms of $\mathbf{s}$ is the canonical sequence $\mathbf{e}_i$.

We will show that any strictly increasing sequence $\mathbf{s}$ can appear at the start of a shortest $uv$-path, and that any such path passes sequentially through each of $U_0, U_1, \ldots, U_d$.
\begin{mylem}
\label{seq}
Let $S \subseteq X$ be a subkei, and let $u,v \in X$ be such that $d_S(u,v) = d \in [2, \infty)$; let $u=v_0v_1 \cdots v_d=v$ be a path of length $d$ in $G_S$ between $u$ and $v$, where the $v_{i-1}v_i$ edge is of colour $i$ for each $i$. Let $\mathbf{s} = (a_1, \ldots, a_r)$ be a strictly increasing sequence of elements from $[d]$. Then there is a shortest path $P_\mathbf{s}:(u=x_0x_1 \cdots x_{a_r - 1}x_{a_r} = v_{a_r} \cdots v_d)$ such that $x_k \in U_k$ for $1 \leqslant k < a_r$ and the $x_{k-1}x_k$ edge in $P_\mathbf{s}$ is of colour $a_k$ for $1 \leqslant k \leqslant r$ (which means in particular that $x_r = u_\mathbf{s}$).
\end{mylem}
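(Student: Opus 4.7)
I would prove this by induction on $r$, explicitly building $P_{\mathbf{s}}$ from the original path by $r$ applications of Lemma \ref{hang}. Writing $P^{(0)} = v_0 v_1 \cdots v_d$, the invariant to carry at stage $j$ is that $P^{(j)}$ is a shortest $uv$-path whose first $j$ edges have colours $a_1, \ldots, a_j$ and which coincides with the original path from position $a_j$ onwards (so in particular its edge from position $a_{j+1}-1$ to position $a_{j+1}$ still has colour $a_{j+1}$). To build $P^{(j+1)}$ from $P^{(j)}$, isolate the subpath $Q$ of $P^{(j)}$ running from $x^{(j)}_j$ to $v_{a_{j+1}}$, of length $a_{j+1}-j$, and apply Lemma \ref{hang} to $Q$ with parameter equal to this full length. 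This hangs new vertices $w_k = (x^{(j)}_{j+k})f_{a_{j+1}}$ below $Q$, producing a new shortest $Q$-path whose first edge now has colour $a_{j+1}$; splicing it back into $P^{(j)}$ yields $P^{(j+1)}$. (In the degenerate case $a_{j+1}=j+1$ the prefix of $\mathbf{s}$ is just $\mathbf{e}_{j+1}$ and no modification is needed at this stage.)

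Two things need to be verified for $P^{(j+1)}$. First, it must be a genuine path: Lemma \ref{hang} guarantees the $w_k$ are mutually distinct and disjoint from the vertices of $Q$, but not a priori from the vertices of $P^{(j)}$ outside $Q$. This is handled for free by observing that $P^{(j+1)}$ is a $uv$-walk of length $d = d_S(u,v)$, so any repeated vertex would yield a strictly shorter $uv$-walk, a contradiction. Second, one must check $x^{(j+1)}_k \in U_k$ for each $k$. The clean way to do this is to strengthen the inductive hypothesis and carry not just membership in $U_k$ but the specific form of a witnessing sequence $\mathbf{t}$ with $x^{(j)}_k = u_{\mathbf{t}}$. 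The invariant to maintain is that the last entry of $\mathbf{t}$ equals $a_k$ when $k < j$, equals $a_j$ when $j \leqslant k \leqslant a_j$, and equals $k$ when $k > a_j$. In all three cases this last entry is strictly less than $a_{j+1}$, so the formula $x^{(j+1)}_k = (x^{(j)}_{k-1})f_{a_{j+1}}$ provided by Lemma \ref{hang} appends $a_{j+1}$ to a strictly increasing sequence, placing $x^{(j+1)}_k$ in $U_k$ and updating the invariant for the next stage.

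The main obstacle is precisely this bookkeeping. The bare assertion $x^{(j)}_{k-1} \in U_{k-1}$ does not single out a witnessing $\mathbf{t}$, and is therefore too weak to guarantee that appending $a_{j+1}$ leaves the sequence strictly increasing; one really has to carry the finer statement about the last entry of $\mathbf{t}$ and check it transforms correctly under each hang. Once the invariant is established and maintained through the induction, stage $r$ delivers $P^{(r)} = P_{\mathbf{s}}$ with all the required properties, including $x_r = u_{\mathbf{s}}$ (since $k=r$ falls in the ``$j \leqslant k \leqslant a_j$'' range at stage $j=r$, with witnessing sequence $(a_1,\ldots,a_r)$).
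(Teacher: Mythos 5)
Your proof is correct and follows essentially the same strategy as the paper's: induction on the length of the sequence, one application of Lemma \ref{hang} per step to make the next edge of the path have the required colour, and a strengthened inductive hypothesis recording a witnessing sequence for each vertex (the paper tracks only that its largest element is at most the current $a_j$, but this serves the same purpose as your finer invariant) so that appending $a_{j+1}$ still yields a strictly increasing sequence. The only real divergence is that you apply Lemma \ref{hang} to the subpath from position $j$ to position $a_{j+1}$ rather than to the whole path from $u$, which forces your extra (but valid) walk-shortening argument for distinctness; the paper avoids this by hanging vertices below the entire path and then splicing.
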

\begin{proof}
We shall prove the following stronger statement by induction on $r$: there exists a shortest path $P_\mathbf{s}:(u=x_0x_1 \cdots x_{a_r - 1}x_{a_r} = v_{a_r} \cdots v_d)$ such that for $1 \leqslant k < a_r$, 
\begin{equation}
\label{strongcond}
x_k = u_\mathbf{t}
\end{equation}  
for a (strictly increasing) sequence $\mathbf{t}$ of length $k$ whose largest element is at most $a_r$, and that for $1 \leqslant k \leqslant r$ the $x_{k-1}x_k$ edge in $P_\mathbf{s}$ is of colour $a_k$. These statements clearly imply the result.

First consider the base case $r=1$; if $\mathbf{s} = \mathbf{e}_1 = (1)$ then the original path $P$ suffices as the first edge is of colour 1, so $v_1 = (u)f_1 = u_{(1)}$. Hence we may assume $a_1 > 1$. Then applying Lemma \ref{hang} to $P$ with $i = a_1$, there exists a shortest path $P':(u=v_0w_0 \cdots w_{a_1 - 2}v_{a_1} \cdots v_d)$ in $G_S$, with an edge of colour $a_1$ between $v_k$ and $w_k$ for $0 \leqslant k \leqslant a_1 - 2$; in particular, the first edge of $P'$ is of colour $a_1$. Now for $k \leqslant a_1 - 2$, $w_k = (v_k)f_{a_1} = (u_{\mathbf{e}_k})f_{a_1}$, so $w_k = u_{(\mathbf{e}_k,a_1)} \in U_{k+1}$. So we obtain \eqref{strongcond} by setting $x_k = w_{k-1}$ for $1 \leqslant k < a_1$ and putting $P_\mathbf{s} = P'$.

So now take $r > 1$ and assume the result for smaller $r$. If $a_r = r$ we have $\mathbf{s} = \mathbf{e}_r$, and in this situation the original path $P$ suffices as the first $r$ edges are of colours $1, \ldots, r$ and $v_k \in U_k$ for $1 \leqslant k \leqslant r$. So we can assume that $a_r > r$.

By applying the inductive hypothesis to the sequence $\mathbf{s'} = (a_1, \ldots, a_{r-1})$ we see that there exists a path 
\[ P_\mathbf{s'}:(u=y_0y_1 \cdots y_{a_{r-1}-1}y_{a_{r-1}} = v_{a_{r-1}} \cdots v_d) \]
such that, for $1 \leqslant k < a_{r-1}$, $y_k = u_\mathbf{t}$ for a strictly increasing sequence $\mathbf{t}$ whose largest element is at most $a_{r-1}$. We also have that the $y_{k-1}y_k$ edge is of colour $a_k$ for $1 \leqslant k < r$, but we don't know the colours of the edges $y_{k-1}y_k$ for $r \leqslant k \leqslant a_{r-1}$. But as $a_r > a_{r-1}$ the $v_{a_r -1}v_{a_r}$ edge of colour $a_r$ is still present in $P_\mathbf{s'}$; hence we can apply Lemma \ref{hang} to $P_\mathbf{s'}$, with $i = a_r$, to obtain a shortest path $P'':(u=v_0w_0 \cdots w_{a_r -2}v_{a_r} \cdots v_d)$ in $G_S$, with an edge of colour $a_r$ between $y_k$ and $w_k$ for $0 \leqslant k \leqslant \min \lbrace a_{r-1}, a_r - 2 \rbrace$ (note that $a_r - 2 < a_{r-1}$ if and only if $a_r = a_{r-1} + 1$). We aim to show that for all $0 \leqslant k \leqslant a_r -2$, $w_k = u_\mathbf{t'}$ for some sequence $\mathbf{t'}$ of length $k + 1$ whose largest element is at most $a_r$. The proof will vary slightly depending on whether $a_r = a_{r-1}+1$ (see Figure \ref{closepath}) or $a_r > a_{r-1}+1$ (see Figure \ref{farpath}).

Fix a $k$ such that $0 \leqslant k \leqslant a_{r-1}-1 \leqslant a_r - 2$; by the inductive hypothesis (specifically \eqref{strongcond}) $y_k = u_\mathbf{t}$ for some sequence $\mathbf{t}$ whose $k$th and largest element is at most $a_{r-1}$. But $w_k = (y_k)f_{a_r} = (u_\mathbf{t})f_{a_r}$, so $w_k = u_{(\mathbf{t},a_r)}$; as $a_r > a_{r-1}$ we have $w_k \in U_{k+1}$, where $w_k = u_\mathbf{t'}$ for some sequence $t'$ whose largest element is at most $a_r$. If $a_r = a_{r-1} + 1$ then we have considered all the vertices $w_1, \ldots, w_{a_r - 2}$.

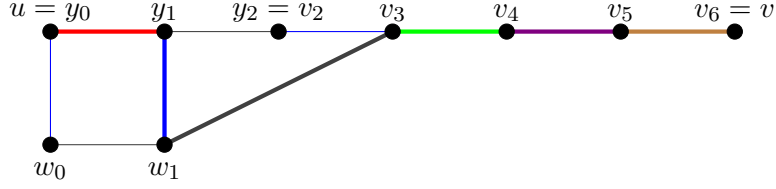
\begin{figure}
\begin{center}
\begin{tikzpicture}
\draw [red, ultra thick] (0,1.5) --(1.5,1.5);
\draw [darkgray] (1.5,1.5) -- (3,1.5);
\draw [blue] (3,1.5) -- (4.5,1.5);
\draw [green, ultra thick] (4.5,1.5) -- (6,1.5);
\draw [violet, ultra thick] (6,1.5) -- (7.5,1.5);
\draw [brown, ultra thick] (7.5,1.5) -- (9,1.5);

\draw [blue] (0,0) -- (0,1.5);
\draw [blue, ultra thick] (1.5,0) -- (1.5,1.5);

\draw [darkgray] (0,0) -- (1.5,0);
\draw [darkgray, ultra thick] (1.5,0) -- (4.5,1.5);

\draw [fill] (0,1.5) circle [radius=0.1];
\draw [fill] (1.5,1.5) circle [radius=0.1];
\draw [fill] (3,1.5) circle [radius=0.1];
\draw [fill] (4.5,1.5) circle [radius=0.1];
\draw [fill] (6,1.5) circle [radius=0.1];
\draw [fill] (7.5,1.5) circle [radius=0.1];
\draw [fill] (9,1.5) circle [radius=0.1];

\draw [fill] (0,0) circle [radius=0.1];
\draw [fill] (1.5,0) circle [radius=0.1];

\node [above] at (9,1.5) {$v_6=v$};
\node [above] at (7.5,1.5) {$v_5$};
\node [above] at (6,1.5) {$v_4$};
\node [above] at (4.5,1.5) {$v_3$};
\node [above] at (3,1.5) {$y_2 = v_2$};
\node [above] at (1.5,1.5) {$y_1$};
\node [above] at (0,1.5) {$u=y_0$};

\node [below] at (0,-0.1) {$w_0$};
\node [below] at (1.5,-0.1) {$w_1$};
\end{tikzpicture}
\end{center}
\caption{Suppose $\mathbf{s} = (2,3)$ and that edges of colour $2$ are red while those of colour $3$ are blue. The top path $P_\mathbf{s'}$ corresponds to the sequence $(2)$, and the bottom path $P''$ is a result of applying Lemma \ref{hang}. The new path $P'''$ is shown in bold.}
\label{closepath}
\end{figure}

Now suppose that $a_r > a_{r-1} + 1$; in this case we also have an edge of colour $a_r$ between $v_k$ and $w_k$ for $a_{r-1} \leqslant k \leqslant a_r - 2$. But as before, $w_k = (v_k)f_{a_r} = (u_{\mathbf{e}_k})f_{a_r} = u_{(\mathbf{e}_k,a_r)}$ for any such $k$, so $w_k \in U_{k+1}$ and $w_k = u_\mathbf{t'}$ for some sequence $t'$ whose largest element is at most $a_r$. Thus we have the desired result on $w_k$ for the entire range $0 \leqslant k \leqslant a_r-2$.

\begin{figure}
\begin{center}
\begin{tikzpicture}
\draw [red, ultra thick] (0,1.5) --(1.5,1.5);
\draw [blue, ultra thick] (1.5,1.5) -- (3,1.5);
\draw [darkgray] (3,1.5) -- (4.5,1.5);
\draw [green] (4.5,1.5) -- (6,1.5);
\draw [violet] (6,1.5) -- (7.5,1.5);
\draw [brown, ultra thick] (7.5,1.5) -- (9,1.5);

\draw [violet] (0,0) -- (0,1.5);
\draw [violet] (1.5,0) -- (1.5,1.5);
\draw [violet, ultra thick] (3,0) -- (3,1.5);
\draw [violet] (4.5,0) -- (4.5,1.5);

\draw [darkgray] (0,0) -- (3,0);
\draw [darkgray, ultra thick] (3,0) -- (4.5,0) -- (7.5,1.5);

\draw [fill] (0,1.5) circle [radius=0.1];
\draw [fill] (1.5,1.5) circle [radius=0.1];
\draw [fill] (3,1.5) circle [radius=0.1];
\draw [fill] (4.5,1.5) circle [radius=0.1];
\draw [fill] (6,1.5) circle [radius=0.1];
\draw [fill] (7.5,1.5) circle [radius=0.1];
\draw [fill] (9,1.5) circle [radius=0.1];

\draw [fill] (0,0) circle [radius=0.1];
\draw [fill] (1.5,0) circle [radius=0.1];
\draw [fill] (3,0) circle [radius=0.1];
\draw [fill] (4.5,0) circle [radius=0.1];

\node [above] at (9,1.5) {$v_6 = v$};
\node [above] at (7.5,1.5) {$v_5$};
\node [above] at (6,1.5) {$v_4$};
\node [above] at (4.5,1.5) {$y_3 = v_3$};
\node [above] at (3,1.5) {$y_2$};
\node [above] at (1.5,1.5) {$y_1$};
\node [above] at (0,1.5) {$u=y_0$};

\node [below] at (0,-0.1) {$w_0$};
\node [below] at (1.5,-0.1) {$w_1$};
\node [below] at (3,-0.1) {$w_2$};
\node [below] at (4.5,-0.1) {$w_3$};
\end{tikzpicture}
\caption{Suppose we now have the sequence $\mathbf{s} = (2,3,5)$, with edges of colour 5 being violet. The top path $P_{s'}$ is the (relabelled) bold path from Figure \ref{closepath}, while the bottom path is again a result of applying Lemma \ref{hang}; note the additional edges. The new path $P'''$ is shown in bold.}
\label{farpath}
\end{center}
\end{figure}
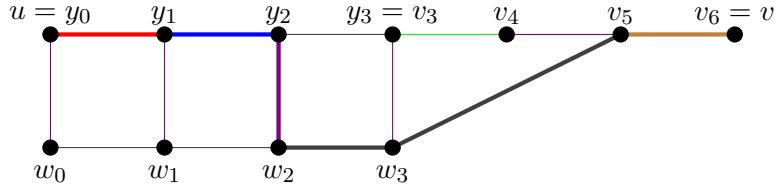

Now as $a_r > r$ and $a_{r-1} \geqslant r-1$ we have $\min \lbrace a_{r-1}, a_r - 2 \rbrace \geqslant r -1$, so there is always an edge of colour $a_r$ between $y_{r-1}$ and $w_{r-1}$. Consider the path $P'''$ obtained by using the first $r-1$ edges of $P_\mathbf{s'}$, the edge $y_{r-1}w_{r-1}$ and then the remaining edges of $P''$; we obtain
\[ P''':(uy_1 \cdots y_{r-1}w_{r-1} \cdots w_{a_r - 2}v_{a_r} \cdots v_d), \] 
where the first $r$ edges are of colours $a_1, \ldots, a_r$. We get the result by putting $x_k = y_k \in U_k$ for $1 \leqslant k < r$ and $x_k = w_{k-1} \in U_k$ for $r \leqslant k < a_r$, and setting $P_\mathbf{s} = P'''$.
\end{proof} 
We can now use this result to show that different sequences correspond to distinct vertices.
\begin{mycor}
\label{vert}
Let $\mathbf{s}$ and $\mathbf{t}$ be distinct, strictly increasing sequences from $[d]$. Then $u_\mathbf{s} \neq u_\mathbf{t}$.
\end{mycor}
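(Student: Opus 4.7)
The plan is to argue by contradiction: suppose $u_\mathbf{s} = u_\mathbf{t}$ for distinct strictly increasing sequences $\mathbf{s}, \mathbf{t}$ from $[d]$. The backbone of the argument is a single distance identity coming straight out of Lemma \ref{seq}: for every strictly increasing sequence $\mathbf{r}$ of length $k \geqslant 1$ from $[d]$, the shortest $uv$-path produced by Lemma \ref{seq} passes through $u_\mathbf{r}$ at its $k$-th vertex, and combined with $d_S(u,u_\mathbf{r}) + d_S(u_\mathbf{r},v) \geqslant d$ this forces $d_S(u,u_\mathbf{r}) = k$ (the case $k=0$ is trivial since $u_{()} = u$).

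First I would dispose of the case $|\mathbf{s}| \neq |\mathbf{t}|$: the two vertices then lie at different graph distances from $u$ by the identity above and so cannot coincide. This reduces matters to sequences of equal length $r$, say $\mathbf{s} = (a_1, \ldots, a_r)$ and $\mathbf{t} = (b_1, \ldots, b_r)$. Since each $f_i$ is an involution, I can peel off any common suffix: as long as the last entries agree, applying the shared $f_{a_i} = f_{b_i}$ on the right of $u_\mathbf{s} = u_\mathbf{t}$ preserves the equation while shortening both sequences. This process terminates at $u_{\mathbf{s}'} = u_{\mathbf{t}'}$ with $\mathbf{s}' = (a_1, \ldots, a_k)$, $\mathbf{t}' = (b_1, \ldots, b_k)$ and $a_k \neq b_k$; such a $k \geqslant 1$ exists because $\mathbf{s} \neq \mathbf{t}$. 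Without loss of generality $a_k < b_k$.

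Next I would construct two strictly increasing sequences of different lengths that correspond to the same vertex, contradicting the distance identity. Append $b_k$ to $\mathbf{s}'$ to get $\mathbf{s}'' = (a_1, \ldots, a_k, b_k)$, which is strictly increasing because $b_k > a_k$, and delete the last entry of $\mathbf{t}'$ to get $\mathbf{t}'' = (b_1, \ldots, b_{k-1})$. Applying $f_{b_k}$ on the right of $u_{\mathbf{s}'} = u_{\mathbf{t}'}$ uses the involution property to cancel $f_{b_k}$ on the right-hand side, giving $u_{\mathbf{s}''} = u_{\mathbf{t}''}$. A quick sanity check shows $k + 1 \leqslant d$ (otherwise $r = k = d$ forces $\mathbf{s} = \mathbf{t} = (1,\ldots,d)$), so the distance identity applies to both sides; but $|\mathbf{s}''| = k+1 \neq k-1 = |\mathbf{t}''|$, yielding the desired contradiction.

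The main obstacle is spotting the correct reduction. One is tempted to strip common prefixes, but this changes the anchor vertex from $u$ and breaks the direct application of Lemma \ref{seq}; stripping suffixes via the involution property keeps $u$ fixed and preserves the framework. The added/removed element $b_k$ then creates the essential length mismatch that collapses the argument via the distance identity.
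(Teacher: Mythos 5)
Your proof is correct, and the crucial case is handled by a genuinely different (and slicker) argument than the paper's. Both proofs extract the same key fact from Lemma \ref{seq}, namely that $u_\mathbf{r}$ sits at position $|\mathbf{r}|$ on a shortest $uv$-path, so $d_S(u,u_\mathbf{r})=|\mathbf{r}|$ and the sets $U_0,\ldots,U_d$ are pairwise disjoint; this disposes of sequences of unequal length in both treatments, and both peel off a common suffix using the involution property to reach equal-length sequences whose last entries differ. Where you diverge is in that remaining case: the paper splices the initial segment of $P_\mathbf{t}$ onto the tail of $P_\mathbf{s}$ (checking via the disjointness of the $U_i$ that the result is a genuine path) and then invokes Corollary \ref{distinct} to get a contradiction from two edges of colour $b_r$; you instead apply $f_{b_k}$ to both sides of $u_{\mathbf{s}'}=u_{\mathbf{t}'}$, turning it into an equality $u_{\mathbf{s}''}=u_{\mathbf{t}''}$ between vertices indexed by strictly increasing sequences of lengths $k+1$ and $k-1$, which contradicts the distance identity already established. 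Your version buys a purely algebraic one-line reduction that never needs Corollary \ref{distinct} nor the verification that the spliced walk is a path; note also that your side condition $k+1\leqslant d$ is automatic, since $a_1<\cdots<a_k<b_k\leqslant d$ already forces it. The paper's version, by contrast, makes the geometric picture (two same-coloured edges on one shortest path) explicit, which is the mechanism reused elsewhere in the argument.
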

\begin{proof}
Let $|\mathbf{s}| = q$ and $|\mathbf{t}| = r$, where we may assume $q \leqslant r$, and suppose that $u_\mathbf{s} = u_\mathbf{t}$. Write $\mathbf{s} = (a_1, \ldots, a_q)$ and $\mathbf{t} = (b_1, \ldots, b_r)$ and consider the paths $P_\mathbf{s}:(ux_1 \cdots x_{a_q} = v_{a_q} \cdots v_d)$ and $P_\mathbf{t}:(uy_1 \cdots y_{b_r} = v_{b_r} \cdots v_d)$ as described in Lemma \ref{seq}; note that $x_q = u_\mathbf{s} = u_\mathbf{t} = y_r$. Thus we may replace the $uy_r$-segment of $P_\mathbf{t}$ with the $ux_q$-segment of $P_\mathbf{s}$ to obtain a $uv$-walk in $G_S$ of length $d - (r-q)$, a contradiction if $q < r$. This shows that $U_q \cap U_r = \emptyset$ for $q \neq r$.

So suppose $q=r$ (so $x_r = y_r$) and for now that $a_r \neq b_r$; we may assume that $a_r < b_r$. Now replace the $ux_r$-segment of $P_\mathbf{s}$ with the $uy_r$-segment of $P_\mathbf{t}$ to obtain a new $uv$-path
\[P':(uy_1 \cdots y_r = x_r \cdots x_{a_r-1}x_{a_r}=v_{a_r} \cdots v_{b_r-1}v_{b_r} \cdots v_d), \]
where we note this is a path as both $P_\mathbf{s}$ and $P_\mathbf{t}$ pass sequentially through the pairwise disjoint sets $U_0, U_1, \ldots, U_d$. But both the $y_{r-1}y_r$ and $v_{b_r - 1}v_{b_r}$ edges on $P'$ are of colour $b_r$, contradicting Corollary \ref{distinct}.

Now note that in any case $x_{r-1} = (x_r)f_{a_r}$ and $y_{r-1} = (y_r)f_{b_r}$, so if $a_r = b_r$ we have $x_{r-1} = y_{r-1}$. An easy inductive argument (and the fact that $\mathbf{s} \neq \mathbf{t}$) shows that there exists some $p \leqslant r$ such that $x_i = y_i$ for $p \leqslant i \leqslant r$ but $a_p \neq b_p$, so we may apply the above argument to the sequences $\mathbf{s'} = (a_1, \ldots, a_p)$ and $\mathbf{t'} = (b_1, \ldots, b_p)$ to get a contradiction, thus proving the result.
\end{proof}

This allows us to prove the main result.

\begin{proof}[Proof of Theorem \ref{diam}]
If $d=0$ or 1 then the result is trivial, so suppose $d \geqslant 2$. $G_S[C]$ has diameter $d$, so there exist $u,v \in C$ such that $d_S(u,v)=d$; as before, assume that $X = [n]$ and $[d] \subseteq S$, and that there is a shortest $uv$-path using edges of colours $1, \ldots, d$ sequentially. There are $2^d$ strictly increasing sequences of elements from $[d]$ (including the empty sequence) and from Corollary \ref{vert} these correspond to $2^d$ distinct vertices (including $u$). Hence $|C| \geqslant 2^d$.

Note that there are $2^{d-1}$ strictly increasing sequences from $[d-1]$; adding $d$ to the end of such a sequence $\mathbf{s'}$ gives a strictly increasing sequence $\mathbf{s}$ from $[d]$. But then $(u_\mathbf{s'})f_d = u_\mathbf{s}$ so the edge $u_\mathbf{s'}u_\mathbf{s}$ is of colour $d$ for each such sequence $\mathbf{s'}$; as all vertices are distinct this gives us $2^{d-1}$ edges of colour $d$.
\end{proof}

\section{Extremal examples}
\label{extrsec}

In this section we construct a family of extremal examples, one for each $d$. Let $X_d := \lbrace u_1, \ldots, u_d \rbrace \cup \lbrace 0,1 \rbrace^d$ be a set of $2^d + d$ elements, and for each $v \in \lbrace 0,1 \rbrace^d$ and $S \subseteq [d]$, define $v_S$ to be the element of $\lbrace 0,1 \rbrace^d$ obtained by changing only the coordinates in $S$, i.e.\
\[ \lbrace i \in [d] \mid (v_S)_i \neq v_i \rbrace = S. \]
We will now define a set of bijections on $X_d$; for each $1 \leqslant i \leqslant d$, set $(u_j)f_{u_i} = u_j$ for all $j$ and $(v)f_{u_i} = v_{\lbrace i \rbrace}$ for all $v \in \lbrace 0,1 \rbrace^d$. We will also set $f_v = \iota$ for all $v \in \lbrace 0,1 \rbrace^d$.

Note that $(v_{\lbrace i \rbrace})_{\lbrace i \rbrace} = v$ and thus each map is an involution; as we also have $(x)f_x = x$ for each $x \in X_d$, we need only show that \eqref{maps} holds to prove that we have defined a kei $K$. Note that for any distinct $i,j \in [d]$ and $v \in \lbrace 0,1 \rbrace^d$,
\[ (v)f_{u_i}f_{u_j} = (v_{\lbrace i \rbrace})f_{u_j} = v_{\lbrace i,j \rbrace} = (v_{\lbrace j \rbrace})f_{u_i} = (v)f_{u_j}f_{u_i}, \]
and it follows that all the maps $(f_x)_{x \in X_d}$ commute. Hence \eqref{maps} reduces to the statement that $f_{(y)f_z} = f_y$ for all $y,z$; this is easily seen to be true.

Now consider the graph $G_K$; by construction, $G_K$ consists of $d$ isolated vertices and a coloured copy of the cube $Q_d$ (see Figure \ref{cubefig}). The cube has diameter $d$, size $2^d$ and contains $2^{d-1}$ edges of each colour $u_1, \ldots, u_d$.

\begin{figure}
\begin{center}
\begin{tikzpicture}
\draw [red] (0,0) -- (1.5,0);
\draw [red] (0,1.5) -- (1.5,1.5);
\draw [red] (0.7,0.6) -- (2.2,0.6);
\draw [red] (0.7,2.1) -- (2.2,2.1);

\draw [blue] (0,0) -- (0.7,0.6);
\draw [blue] (1.5,0) -- (2.2,0.6);
\draw [blue] (0,1.5) -- (0.7,2.1);
\draw [blue] (1.5,1.5) -- (2.2,2.1);

\draw [green] (0,0) -- (0,1.5);
\draw [green] (1.5,0) -- (1.5,1.5);
\draw [green] (0.7,0.6) -- (0.7,2.1);
\draw [green] (2.2,0.6) -- (2.2,2.1);

\draw [fill] (0,0) circle [radius=0.1];
\draw [fill] (1.5,0) circle [radius=0.1];
\draw [fill] (0,1.5) circle [radius=0.1];
\draw [fill] (1.5,1.5) circle [radius=0.1];

\draw [fill] (0.7,0.6) circle [radius=0.1];
\draw [fill] (0.7,2.1) circle [radius=0.1];
\draw [fill] (2.2,0.6) circle [radius=0.1];
\draw [fill] (2.2,2.1) circle [radius=0.1];

\node [below left] at (0,0) {000};
\node [below right] at (1.5,0) {100};
\node [left] at (0,1.5) {001};
\node [above left] at (0.8,0.6) {010};

\draw [red, fill=red] (-2.5,1.2) circle [radius=0.1];
\draw [blue, fill=blue] (-3,0.4) circle [radius=0.1];
\draw [green, fill=green] (-2,0.4) circle [radius=0.1];

\node [above] at (-2.5,1.2) {$u_1$};
\node [below left] at (-3,0.4) {$u_2$};
\node [below right] at (-2,0.4) {$u_3$};

\end{tikzpicture}
\end{center}
\caption{An example of a kei on $X_3$.}
\label{cubefig}
\end{figure}
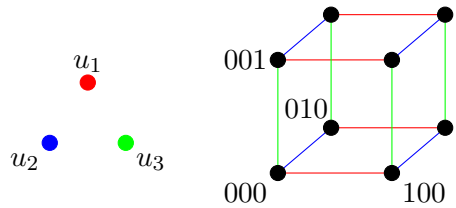

\section*{Acknowledgements}

This paper forms part of the author's Ph.D. thesis \cite{thesis}, supervised by Oliver Riordan; his assistance in preparing this paper is gratefully acknowledged. The thesis contains some applications of this result.


\begin{thebibliography}{10}

\bibitem{thesis}
Matthew Ashford.
\newblock {\em Graphs of Algebraic Objects}.
\newblock PhD thesis, University of Oxford, 2016.

\bibitem{mainpaper}
Simon~R Blackburn.
\newblock Enumerating finite racks, quandles and kei.
\newblock {\em The Electronic Journal of Combinatorics}, 20(3):P43, 2013.

\bibitem{history}
Roger Fenn and Colin Rourke.
\newblock Racks and links in codimension two.
\newblock {\em Journal of Knot theory and its Ramifications}, 1(04):343--406,
  1992.

\bibitem{quanhist1}
David Joyce.
\newblock A classifying invariant of knots, the knot quandle.
\newblock {\em Journal of Pure and Applied Algebra}, 23(1):37--65, 1982.

\bibitem{quanhist2}
Sergei~Vladimirovich Matveev.
\newblock Distributive groupoids in knot theory.
\newblock {\em Sbornik: Mathematics}, 47(1):73--83, 1984.

\bibitem{widerhist}
Sam Nelson.
\newblock The combinatorial revolution in knot theory.
\newblock {\em Notices of the AMS}, 58(11):1553--1561, 2011.

\bibitem{survey}
David Stanovsk{\`y}.
\newblock The origins of involutory quandles.
\newblock {\em arXiv preprint arXiv:1506.02389}, 2015.

\bibitem{keihist}
Mituhisa Takasaki.
\newblock Abstractions of symmetric functions.
\newblock {\em Tohuko Mathematical Journal}, 49:143--207, 1943.

\end{thebibliography}
\end{document}